\newtheorem{theorem}{Theorem}[section]
\newtheorem{theorem*}{Theorem}
\newtheorem{lemma}[theorem]{Lemma}
\newtheorem{proposition}[theorem]{Proposition}
\theoremstyle{definition} 
\newtheorem{definition}[theorem]{Definition}
\newtheorem*{definition*}{Definition}
\newtheorem*{remark*}{Remark}
\newtheorem{question}[theorem]{Question}
\begin{document}

\begin{abstract}
We give examples of regular boundary data for the Dirichlet problem for the Complex Homogeneous Monge-Amp\`ere Equation over the unit disc, whose solution is completely degenerate on a non-empty open set and thus fails to have maximal rank.
\end{abstract}

\title[The Maximal Rank Problem for the HMAE]{On the Maximal Rank Problem for the \\Complex Homogeneous Monge-Amp\`ere Equation}

\author{Julius  Ross and David Witt Nystr\"om}
\maketitle
\section{Introduction}
Let $(X,\omega)$ be a compact K\"ahler manifold of dimension $n$ and $B$ be a Riemann surface with boundary $\partial B$.  Suppose $(\phi_\tau)_{\tau\in \partial B}$ is a smooth family of K\"ahler potentials on $X$;  so each $\phi_\tau$ is a smooth function on $X$, varying smoothly in $\tau$, that satisfies
$$ \omega + dd^c\phi_\tau >0.$$
Then let $\Phi$ be the solution to the Dirichlet problem for the complex Homogeneous Monge-Amp\`ere Equation (HMAE) with this boundary data, so $\Phi$ is a function on $X\times B$ that satisfies
\begin{align}\label{eq:hmae}
\Phi(\cdot,\tau) = \phi_\tau(\cdot) &\text{ for } \tau\in \partial B\\
\pi_{X}^*\omega + dd^c\Phi&\ge 0, \nonumber \\
(\pi_{X}^* \omega + dd^c\Phi)^{n+1}&=0, \nonumber
\end{align}
where $\pi_X:X\times B\to X$ is the projection.  From standard pluripotential theory we know there exists a unique weak solution $\Phi$ to this equation.  The \emph{maximal rank problem} in this setting asks whether the current 
$$\pi_X^*\omega + dd^c\Phi$$
has maximal rank in the fibre directions, that is whether the current $\omega + dd^c\Phi(\cdot,\tau)$ on $X$ is strictly positive for each $\tau\in B$.  Said another way this asks if the rank of $\pi_X^*\omega + dd^c\Phi$ is precisely $n$ at every point in $X\times B$, which is the maximum possible since $(\pi_X^*\omega + dd^c\Phi)^{n+1}=0$. Similarly one has the \emph{constant rank problem} in which one asks if the rank of $\pi_X^*\omega + dd^c\Phi$ is the same at every point.    The purpose of this note is to answer this question negatively, giving an explicit example in which the rank  fails to be maximal.

\begin{theorem}\label{thm:mainimprecise}
Let $B=\overline{\mathbb D}\subset \mathbb C$ be the closed unit disc and $(X,\omega)=(\mathbb P^1,\omega_{FS})$ where $\omega_{FS}$ denotes the Fubini-Study form.  Then there exists a smooth family of K\"ahler potentials $(\phi_\tau)_{\tau\in \partial \mathbb D}$ on $\mathbb P^1$ such that the solution $\Phi$ to the HMAE \eqref{eq:hmae} is completely degenerate on some non-empty open subset $S\subset \mathbb P^1\times \mathbb D$, i.e.
$$ \pi_{\mathbb P^1}^*\omega_{FS} + dd^c\Phi |_S =0.$$
\end{theorem}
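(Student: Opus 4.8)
The plan is to run the Dirichlet problem in reverse: instead of solving the HMAE for prescribed data, I will build an explicit candidate $\Phi$ on $\mathbb P^1\times\mathbb D$ that is degenerate on a chosen open set, read off its boundary trace $\phi_\tau=\Phi(\cdot,\tau)$, $\tau\in\partial\mathbb D$, and then certify that $\Phi$ really is the solution by the comparison principle. The point is that here $n=1$, so the form $\Theta:=\pi_{\mathbb P^1}^*\omega_{FS}+dd^c\Phi$ lives on the complex surface $\mathbb P^1\times\mathbb D$ and the homogeneous equation $\Theta^2=0$ is automatic as soon as $\Theta\ge 0$ has rank at most one everywhere. Thus the only conditions to verify on a candidate are that it is $\omega_{FS}$-psh, continuous up to the boundary, assumes the data $\phi_\tau$, and has $\Theta$ of rank $\le 1$; the comparison principle for the HMAE then forces it to coincide with the unique weak solution, and any open set on which $\Theta\equiv 0$ witnesses the failure of maximal rank.

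To produce such a candidate I will prescribe its Monge--Amp\`ere foliation directly. On the complement $U$ of a closed set $\overline S$ I take a family of holomorphic graphs $\tau\mapsto(f_\alpha(\tau),\tau)$, with $f_\alpha\colon\mathbb D\to\mathbb P^1$, that foliate $U$, and I define $\Phi$ on each leaf to be the $\omega_{FS}$-harmonic extension of its boundary values, i.e.\ the function with $dd^c(\Phi|_{\mathrm{leaf}})=-f_\alpha^*\omega_{FS}$; this makes $\Theta$ of rank exactly one on $U$ with kernel tangent to the leaves. On the open set $S$ I instead let $\Phi$ be a local potential with $dd^c\Phi=-\pi_{\mathbb P^1}^*\omega_{FS}$, so that $\Theta|_S\equiv 0$; such $\Phi$ exists on $S$ and is unique up to a pluriharmonic term, which I will use as the free parameter for the gluing. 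To make both the disc family and the matching explicit I will exploit the $S^1$-symmetry of $(\mathbb P^1,\omega_{FS})$: choosing the whole configuration invariant under a circle action collapses the leaf equation and the harmonicity condition to a one-dimensional convexity (moment-map / Legendre-transform) problem, in which the leaves become explicit holomorphic discs and $S$ becomes the preimage of an interval.

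The delicate interior step is the interface $\partial S$. The foliation of $U$ must limit onto $\overline S$ so that the two prescriptions of $\Phi$ fit together in class $C^{1,1}$; since the optimal regularity for the HMAE is only $C^{1,1}$, this is exactly the expected smoothness, with $\partial S$ the locus where $\Phi$ fails to be $C^2$. I will arrange the leaves to meet $\partial S$ tangentially so that the gluing reduces to a first-order compatibility condition, which the symmetry reduction renders an explicit ODE matching that can be checked by hand; one must also confirm that the leaves genuinely foliate $U$, with no caustic forming before they reach $\partial S$.

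The main obstacle is not the existence of a degenerate candidate but the \emph{smoothness and strict positivity of the induced boundary data}. Degeneracy of the interior solution is classically tied to singular or merely Lipschitz data, so the content of the theorem is that the boundary circles of the prescribed leaves, as they sweep out $\partial\mathbb D\times\mathbb P^1$, must assemble into a $C^\infty$ family $(\phi_\tau)$ of \emph{strictly} $\omega_{FS}$-psh potentials while the interior still collapses on $S$. Verifying this smoothness and strict positivity is where the real work lies, and is precisely what the explicit circle-symmetric model is designed to make transparent; granting it, the comparison principle closes the argument and $\Theta|_S\equiv 0$ gives the complete degeneracy claimed.
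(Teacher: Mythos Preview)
Your high-level strategy---construct an explicit candidate $\Phi$ by prescribing its Monge--Amp\`ere foliation, glue in a flat piece over $S$, then certify via the comparison principle---is sound in outline, and is in spirit not far from what the paper does. The genuine gap is in the explicit model you propose.

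If the whole configuration is invariant under the $S^1$ acting on the $\mathbb P^1$ factor (the toric / moment-map / Legendre-transform picture you invoke), then smooth strictly positive boundary data \emph{forces} maximal rank, so no such example exists. Under Legendre transform the HMAE linearises: writing $v_\tau$ for the symplectic potential of $\phi_\tau$ on the moment polytope $[0,1]$, the solution has symplectic potential obtained by harmonically extending $\tau\mapsto v_\tau(x)$ from $\partial\mathbb D$ to $\mathbb D$ for each $x$. Strict positivity of $\omega_{FS}+dd^c\phi_\tau$ is exactly strict convexity of $v_\tau$, and a Poisson average of strictly convex functions is strictly convex; hence every fibre potential in the interior is strictly positive. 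Put differently, if $S$ is the moment-map preimage of an interval and meets a boundary fibre, then on that fibre $\omega_{FS}+dd^c\phi_\tau$ vanishes on an annulus, contradicting strict positivity; and if $S$ avoids the boundary, the linearised picture above shows it cannot appear in the interior either. (Taking instead the $S^1$ in the $\tau$ direction makes the boundary data a single potential and the solution constant in $\tau$, again maximal rank.) So the very feature you flag as ``the main obstacle'' is fatal to the circle-symmetric model, not clarified by it.

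The paper's construction rests precisely on breaking the toric symmetry on $\mathbb P^1$. One chooses $\phi\in C^\infty(\mathbb P^1)$ with $\omega_{FS}+dd^c\phi>0$ and $\phi(w)\ge -\ln(1+|w|^2)$ with equality exactly on an \emph{arc} $\gamma\subset\mathbb C_w$ through $w=0$, and sets $\phi_\tau(z)=\phi(\tau z)$. The substitution $z\mapsto\tau z$ turns the problem into one with constant boundary data $\phi$ and a prescribed Lelong number $1$ at the origin; comparing with the extremal function $\psi_1(z)=\ln\bigl(|z|^2/(1+|z|^2)\bigr)$, whose contact set with $\phi$ is exactly $\gamma$, a maximum-principle argument shows the transformed solution equals $\psi_1$ outside $f(\mathbb D_{|\tau|})$, where $f:\mathbb D\to\mathbb C_z\setminus\gamma$ is a Riemann map. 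Since $\psi_1$ is $\omega_{FS}$-harmonic there, the current vanishes on a nonempty open set. The arc is the whole point: its complement in $\mathbb C_z$ is simply connected (so the Riemann map exists) yet $f(\mathbb D_{|\tau|})$ has complement with nonempty interior for every $|\tau|<1$, producing $S$. No $S^1$-invariant $\phi$ has a contact set of this shape.
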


A more precise version of this statement is provided in Theorem \ref{thm:mainprecise}.  The motivation and ideas build on previous work of the authors \cite{RWDisc, RW, RWApplications} in which we understand the solution to the HMAE of a certain kind through a free boundary problem in the plane called the Hele-Shaw flow.  But rather than expecting the reader to be an expert in this topic we have chosen to give a direct proof, which can be found in Section \ref{sec:maintheorem}, that is both self-contained and rather simple.  Then in Section \ref{sec:discussion} we explain the motivation behind our construction, as well as give a second (but essentially equivalent) proof that relies on more machinery.  We then end with some questions and possible extensions.  

Of course in the above Theorem, $\pi_{\mathbb P^1}^* \omega_{FS} + dd^c\Phi$ is not identically zero, and so does not have constant rank.  In fact we can say more, and it is possible to arrange so that there is a non-empty open set in $\mathbb P^1\times \mathbb D$ on which $\pi_{\mathbb P^1}^* \omega_{FS} + dd^c\Phi$ is regular (i.e. smooth and of maximal rank).    It is worth commenting from the outset that we do not expect the solution we have here to be everywhere smooth, but it should be possible to describe precisely where it is regular and where it is degenerate.   All of this will be discussed in more detail in Section \ref{sec:discussion}.

\subsection{Comparison with other work}
It is known that convex solutions to elliptic partial differential equations have a constant rank property.  Early works of this include that of Caffarelli-Friedman \cite{CafarelliFriedman} and Yau \cite{SingerYau}.  These have since been built upon by many others, and it is now known that the constant rank property holds for a wide class of elliptic equations (see, for instance, Korevaar-Lewis \cite{KorevaarLewis}, Bian-Guan \cite{BianGuan1,BianGuan2}, Caffarelli-Guan-Ma \cite{CafarelliGuanMa}, Sz\'ekelyhidi-Weinkove \cite{SzWeinkove}).    In this paper we are interested in the complex degenerate  situation, about which much less has been written.  The most famous result along these lines, and in the positive direction, is that of Lempert \cite{Lempert} who proves that on a convex domain in $\mathbb C^n$ the solution to the complex HMAE with prescribed singularity at an interior point (the pluri-complex Green function)  is smooth and of maximal rank.   The maximal rank problem for other partial differential equations in the complex case has also been taken up by Guan-Li-Zhang \cite{GuanLiZhang} and by Li \cite{Qi}.

The closest previous work to that of this paper is probably that of Guan-Phong \cite{GuanPhong1}, who study the problem of finding uniform lower bounds for the eigenvalues of the  solution to the (non-degenerate) Monge-Amp\`ere Equation in the limit as the equation becomes degenerate.  Moreover,  they ask whether solutions to the complex  HMAE has maximal rank  \cite[discussion after Theorem 4]{GuanPhong1}.   The idea of maximal rank for the complex HMAE also appears in the ideas of Chen-Tian through the concept of an almost-regular solution to the HMAE, that fails to have maximal rank only on a set which is small in a precise sense (see \cite{ChenTian}). The kinds envelopes that we use in our proof also can be defined more generally, and even in higher dimensions, which is the topic of previous work of the authors \cite{RWTubular}, in which we prove a constant-rank theorem \cite[Theorem 6.2]{RWTubular} that we call ``optimal regularity".

Questions concerning the regularity of the solution to the Dirichlet problem for the kind of complex HMAE we consider here go back at least as far as the work of Semmes \cite{Semmes} and Donaldson \cite{Donaldson}, and has been the focus of much interest due to it being the geodesic equation in the space of K\"ahler metrics.   By the work of Chen \cite{Chen} with complements by B\l ocki \cite{Blocki} we know such a solution always has bounded Laplacian (so in particular is $C^{1,\alpha}$ for any $\alpha<1$).   In fact in our case, since we are working on $\mathbb P^1$, the results of \cite{Blocki} imply that $\Phi$ is $C^{1,1}$.  (We observe that we do not actually need to know this regularity for the direct proof of our main theorem). Donaldson gives in \cite{Donaldson} examples of boundary data for which the solution is not-regular, but the nature of the irregularity there is left unknown (for instance Donaldson's example may have maximal rank but fail to be everywhere smooth).\medskip

\noindent {\bf Acknowledgements: } The authors would like to thank Valentino Tossatti for discussions, in particular for pointing out that this problem was a topic for discussion at the American Institute of Mathematics (AIM) workshop on The Complex Monge-Amp\`ere Equation \cite{AIMMeeting}.   During this work JR was supported by an EPSRC Career Acceleration Fellowship (EP/J002062/1). 

\section{Main Theorem}\label{sec:maintheorem}

\subsection{Notations}
We let $\mathbb D_r$ be the open disc of radius $r$ in the complex plane about the origin, $\mathbb D=\mathbb D_1$ and $\mathbb D^\times = \mathbb D\setminus \{0\}$.  Throughout we consider the standard cover of $\mathbb P^1$ by two charts equal to the complex plane with coordinates $z$ and $w=1/z$.    We shall denote these two charts by $\mathbb C_z$ and $\mathbb C_w$ respectively.   We use the convention $d^c = \frac{1}{2\pi}(\overline{\partial} - \partial)$ so $dd^c \log |z|^2 = \delta_0$, and normalise the Fubini-Study form $\omega_{FS}$ so $\int_{\mathbb P^1} \omega_{FS}=1$.         Thus $\omega_{FS}= dd^c \log (1+|z|^2)$ locally on $\mathbb C_z$.

\subsection{Statement of Main Theorem}

The following is a precise version of our main theorem.   By an \emph{arc} in $\mathbb C$ we mean the image $\gamma$ of a smooth map $[0,1]\to \mathbb C$ that does not intersect itself.  From now on $B= \overline{\mathbb D}$ is the closed unit disc and $(X,\omega) = (\mathbb P^1,\omega_{FS})$. 

\begin{theorem}\label{thm:mainprecise}
Suppose that $\phi\in C^{\infty}(\mathbb P^1)$ satisfies
\begin{enumerate}
\item $\omega_{FS} + dd^c \phi>0$
\item On $\mathbb C_w\subset \mathbb P^1$ it holds that
$$\phi\ge -\ln (1+|w|^2)$$ 
with equality precisely on an arc in $\mathbb C_w$.
\end{enumerate}
Then setting 
$$\phi_\tau(z):=\phi(\tau z) \text{ for }\tau\in \partial \mathbb D,$$ the solution $\Phi$ to the HMAE \eqref{eq:hmae} does not have maximal rank.  In fact there is a non-empty open subset $S\subset \mathbb P^1\times \mathbb D$ such that
$$ \pi_{\mathbb P^1}^* \omega_{FS} + dd^c\Phi |_S =0.$$
\end{theorem}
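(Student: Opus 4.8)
The plan is to exploit the rotational symmetry of the boundary data. Since $\phi_\tau(z) = \phi(\tau z)$, the boundary data on $\partial\mathbb{D}$ is built from a single potential $\phi$ precomposed with multiplication by $\tau$. My first step would be to guess that this symmetry propagates into the interior of the disc: I would look for a solution of the form $\Phi(z,\tau) = \Psi(\tau z)$ for some function $\Psi$ on a suitable domain, or more generally try to reduce the two-variable HMAE on $\mathbb{P}^1 \times \mathbb{D}$ to a one-variable problem. The key structural observation is that the HMAE with such boundary data should be solvable via an envelope: the solution is the supremum of all $(\pi_X^*\omega + dd^c(\cdot))$-subharmonic functions on $X\times B$ dominated by the boundary data, i.e.\ a Perron-type envelope. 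Because $dd^c(\Psi\circ(\tau z))$ degenerates precisely where $\Psi$ is harmonic (the complex Hessian of a composition with the holomorphic map $(z,\tau)\mapsto \tau z$ drops rank), complete degeneracy of $\pi^*\omega_{FS} + dd^c\Phi$ on an open set should correspond to $\Phi$ locally agreeing with $-\ln(1+|w|^2)$, which is the potential making $\omega_{FS}$ itself exact in the $\mathbb{C}_w$ chart, up to a pluriharmonic correction.

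Concretely, I would work in the $\mathbb{C}_w$ chart and normalize so that $\omega_{FS} = dd^c\ln(1+|w|^2)$, whence condition (2) says $\phi + \ln(1+|w|^2) \ge 0$ with equality exactly on an arc $\gamma$. The function $u := \phi + \ln(1+|w|^2)$ is then a nonnegative $\omega_{FS}$-plurisubharmonic-type quantity vanishing on $\gamma$; the aim is to show that the harmonic/envelope extension of the boundary data forces $\Phi$ to satisfy $\Phi(\cdot,\tau) = -\ln(1+|w|^2)$ on a whole open region, which is exactly the statement $\pi^*\omega_{FS}+dd^c\Phi = 0$ there. The mechanism should be that the arc $\gamma$, rotated by $\tau$ as $\tau$ ranges over $\partial\mathbb{D}$, sweeps out a region, and on the part of $\mathbb{P}^1\times\mathbb{D}$ over which the constraint $\Phi \ge -\ln(1+|w|^2)$ is active, the solution is pinned to the lower envelope $-\ln(1+|w|^2)$. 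I would verify that $-\ln(1+|w|^2)$, viewed as a function of $(w,\tau)$ constant in $\tau$, is a genuine solution of the HMAE (it is, since $\pi^*\omega_{FS}+dd^c$ of it is zero), and that it lies below the true solution everywhere while matching the boundary data along the sweep of the arc.

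The main technical device I expect to need is a comparison/barrier argument: one constructs an explicit competitor $\widetilde\Phi$ that equals the boundary data on $\partial\mathbb{D}$, is admissible (satisfies $\pi^*\omega_{FS}+dd^c\widetilde\Phi \ge 0$), solves the homogeneous equation, and coincides with $-\ln(1+|w|^2)$ on an open set $S$; by uniqueness of the weak solution $\widetilde\Phi = \Phi$, delivering the conclusion. Building $\widetilde\Phi$ is where I would invest the work: I would try to write it as an envelope $\widetilde\Phi(w,\tau) = \sup\{h(w,\tau)\}$ over pluriharmonic (or $S^1$-invariant subsolution) candidates bounded by the rotated boundary data, and then identify the region where the obstacle $-\ln(1+|w|^2)$ is binding.

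The step I expect to be the hardest is pinning down that the degeneracy set $S$ is genuinely open and nonempty rather than a measure-zero sweep of the arc. The arc $\gamma$ itself is one-dimensional, and naively rotating it produces a two-real-dimensional region in the $w$-plane for each fixed modulus, which need not have interior. The crucial point must be that the \emph{active} (obstacle-touching) set in the $(w,\tau)$ geometry is open because the active constraint propagates along the characteristic (null) foliation of the degenerate equation: once $\Phi$ touches the obstacle on a piece of a characteristic disc, it must agree with it along the entire leaf, and these leaves fill an open set. Making this precise, using the existence of the arc of equality in (2) to seed a full leaf of complete degeneracy and then showing such leaves accumulate to an open set, is the heart of the argument. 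I would organize this via the Hele-Shaw flow picture from \cite{RWDisc, RW}, where the arc corresponds to the initial free boundary and its evolution governs exactly which region of the disc becomes degenerate, but present the comparison argument directly so as to keep the proof self-contained as promised.
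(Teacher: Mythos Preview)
Your proposal has the right instincts --- using $-\ln(1+|w|^2)$ (equivalently $\psi_1(z)=\ln\frac{|z|^2}{1+|z|^2}$ in the $z$-chart) as a barrier and invoking a comparison argument is exactly what the paper does --- but it is missing the two concrete ideas that make the argument go through, and one of your heuristics points in the wrong direction.

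First, the ansatz $\Phi(z,\tau)=\Psi(\tau z)$ and the picture of ``rotating the arc $\gamma$ by $\tau\in\partial\mathbb D$'' do not lead anywhere useful. The rotated arcs for $|\tau|=1$ are still one-dimensional; you correctly flag this as the hard step, but your proposed fix (propagation along characteristic leaves) is too vague to produce an open set without knowing what the leaves actually are. The paper instead performs a precise change of variables: it replaces $\Phi$ by the auxiliary solution $\tilde\Phi$ of the HMAE over $\overline{\mathbb D}$ with \emph{constant} boundary data $\phi$ and a prescribed Lelong number $1$ at $(z,\tau)=(0,0)$, related to $\Phi$ by
\[
\Phi(z,\tau)+\ln|\tau|^2+\ln(1+|z|^2)=\tilde\Phi(\tau z,\tau)+\ln(1+|\tau z|^2).
\]
This is the correct way to exploit the symmetry; the $S^1$-action becomes genuinely trivial on the boundary of the $\tilde\Phi$ problem, and the rotation is absorbed into the Lelong-number singularity.

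Second, and more importantly, the mechanism that produces an \emph{open} degenerate set is not rotation but conformal uniformisation. Since $\gamma$ is an arc through $w=0$, the complement $\mathbb C_z\setminus\gamma$ is simply connected, and one takes a Riemann map $f:\mathbb D\to\mathbb C_z\setminus\gamma$ with $f(0)=0$. The graph of $f$ is a harmonic disc for $\tilde\Phi$: a maximum-principle argument pins $\tilde\Phi(f(\tau),\tau)=\psi_1(f(\tau))$ along it. Then for each fixed $\tau\in\mathbb D^\times$, another maximum principle (now in the $z$-variable, using that $\psi_1$ is $\omega_{FS}$-harmonic off $z=0$) forces $\tilde\Phi(\cdot,\tau)=\psi_1$ on all of $\mathbb P^1\setminus f(\mathbb D_{|\tau|})$. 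Since $f(\mathbb D_{|\tau|})$ is a proper subdomain of $\mathbb C_z$, its complement has nonempty interior for every $\tau$, and this is what makes $S$ open. Your outline never introduces $f$, so it has no way to identify the leaves or to see why the contact set is thick.
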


 \subsection{Envelopes} 
For the proof we need some background concerning envelopes of subharmonic functions.   Fix a potential $\phi\in C^{\infty}(\mathbb P^1)$ so $\omega_{FS} + dd^c\phi>0$.

\begin{definition}\label{def:defofpsi}
For $t\in (0,1]$ set
$$\psi_{t} := \sup\{\psi\colon \mathbb P^1\to \mathbb R\cup \{-\infty\}: \psi \text{ is usc, }\psi\le \phi \text{ and } \omega_{FS} + dd^c\psi\geq 0 \text{ and } \nu_{z=0}(\psi)\ge t\}.$$
\end{definition}

Here $\nu_{z=0}$ denotes the Lelong number at the point $z=0$, so $\nu_{z=0}(\psi)\ge t$ means that $\psi(z)\le t\ln |z|^2 + O(1)$ near $z=0$.  As the upper-semi continuous regularisation of $\psi_t$ is itself a candidate for the envelope defining $\psi_t$, we see that $\psi_t$ is itself upper-semicontinuous. 

\begin{definition}
For $t\in (0,1]$ set
\begin{equation}
\Omega_t:=\Omega_t(\phi) : = \{ z\in \mathbb P^1 : \psi_t(z)<\phi(z)\}.\label{eq:defHS}
\end{equation} 
\end{definition}

%

Clearly if $t\le t'$ then $\psi_{t'}\le\psi_t$ and so $\Omega_t\subset \Omega_{t'}$.   Now, unless one assumes some additional symmetry of $\phi$, it is generally quite hard to describe the sets $\Omega_t$.  However as the next Lemma shows it is possible, under suitable hypothesis, to describe the largest one $\Omega_1$ by looking at the level set on which $\phi$ takes its minimum value.

\begin{lemma}\label{lem:omega1}
Let $\phi\in C^{\infty}(\mathbb P^1)$ be such that $\omega_{FS} + dd^c\phi>0$ and $\phi(w) \ge - \ln (1+|w|^2)$ on $\mathbb C_{w}$ with equality precisely on some non-empty subset $\gamma\subset \mathbb C_w$ containing $w=0$.   Then 
$$\psi_1(z) =\ln\left(\frac{|z|^2}{1+|z|^2}\right)$$
and
$$\Omega_1(\phi) = \mathbb P^1\setminus \gamma.$$
\end{lemma}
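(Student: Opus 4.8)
The plan is to establish the two displayed equalities by proving matching inequalities for $\psi_1$; the description of $\Omega_1$ then follows by comparing $\psi_1$ with $\phi$ pointwise. Write $\psi_*(z):=\ln\!\left(\frac{|z|^2}{1+|z|^2}\right)$ for the claimed value of $\psi_1$. For the inequality $\psi_1\ge\psi_*$ I would simply check that $\psi_*$ is admissible in the envelope of Definition \ref{def:defofpsi}. Computing in the chart $\mathbb C_z$ gives $\omega_{FS}+dd^c\psi_* = dd^c\ln|z|^2 = \delta_0\ge0$, and the expansion $\psi_*(z)=\ln|z|^2+O(|z|^2)$ near $z=0$ shows $\nu_{z=0}(\psi_*)=1$. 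Crucially, in the chart $\mathbb C_w$ one has $\psi_*=-\ln(1+|w|^2)$, so the hypothesis $\phi\ge-\ln(1+|w|^2)$ is exactly the statement $\psi_*\le\phi$. Thus $\psi_*$ is usc, dominated by $\phi$, $\omega_{FS}$-subharmonic and has the required Lelong number, and hence competes in the supremum, giving $\psi_1\ge\psi_*$.

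The reverse inequality $\psi_1\le\psi_*$ is the heart of the matter and where I expect the real work to be. Let $\widetilde\psi$ be an arbitrary competitor (usc, $\le\phi$, with $\omega_{FS}+dd^c\widetilde\psi\ge0$, $\nu_{z=0}(\widetilde\psi)\ge1$, and $\not\equiv-\infty$); I want to show $\widetilde\psi\le\psi_*$. Transferring to the $w$-chart, where $\psi_*=-\ln(1+|w|^2)$, this is equivalent to $g\le0$ for
$$ g(w):=\widetilde\psi(w)+\ln(1+|w|^2). $$
Since $dd^c g=\omega_{FS}+dd^c\widetilde\psi\ge0$, the function $g$ is subharmonic on all of $\mathbb C_w=\mathbb C$, the point $w=0$ included. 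The decisive point is that $g$ is bounded above on $\mathbb C$: on compact sets this is immediate from $\widetilde\psi\le\phi$ and smoothness of $\phi$, while as $|w|\to\infty$ (i.e. $z\to0$) the Lelong bound $\widetilde\psi(z)\le\ln|z|^2+C$ combines with the identity $\ln(1+|w|^2)=\ln(1+|z|^2)-\ln|z|^2$ to give $g\le C+\ln2$. I would then invoke the Liouville-type theorem for subharmonic functions in the plane: a subharmonic function on $\mathbb C$ that is bounded above is constant. Hence $g\equiv c$, so $\widetilde\psi=c-\ln(1+|w|^2)$; reinserting $\widetilde\psi\le\phi$ gives $c\le\phi+\ln(1+|w|^2)$, whose infimum over $\mathbb C_w$ equals $0$ and is attained on the non-empty set $\gamma$ by hypothesis. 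Thus $c\le0$, so $\widetilde\psi\le-\ln(1+|w|^2)=\psi_*$. Taking the supremum over competitors yields $\psi_1\le\psi_*$, and therefore $\psi_1=\psi_*$. The main obstacle here is the careful bookkeeping of the chart change $w=1/z$ for the potentials, and in particular verifying that the Lelong hypothesis really forces $g$ to be bounded above near $w=\infty$; once that and subharmonicity across $w=0$ are in hand, the planar Liouville theorem closes the argument cleanly.

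Finally, to compute $\Omega_1=\{z:\psi_1(z)<\phi(z)\}$ I would compare $\psi_1$ and $\phi$ directly. In the chart $\mathbb C_w$ we now know $\psi_1=-\ln(1+|w|^2)$, so $\psi_1<\phi$ holds precisely where the inequality $\phi\ge-\ln(1+|w|^2)$ is strict, namely on $\mathbb C_w\setminus\gamma$. At the one remaining point $z=0$ (that is $w=\infty$) we have $\psi_1=-\infty<\phi(0)$, so $z=0\in\Omega_1$ as well. Since $\gamma\subset\mathbb C_w$, combining these gives $\Omega_1=\mathbb P^1\setminus\gamma$, as claimed.
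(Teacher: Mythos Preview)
Your proof is correct and follows essentially the same route as the paper: both arguments reduce to the planar Liouville theorem for subharmonic functions bounded above, applied to the difference between a competitor and the candidate $\psi_*$. The only cosmetic difference is that the paper carries this out in the $z$-chart (looking at $\psi-\psi_*$ on $\mathbb C_z$, extended across $z=0$ by removable singularities) whereas you work in the $w$-chart with $g=\widetilde\psi+\ln(1+|w|^2)$; these are the same function under $w=1/z$, and your version has the minor advantage of not needing the preliminary observation that the Lelong number cannot exceed $1$.
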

\begin{proof}
Observe first that the the only upper-semicontinuous $\psi:\mathbb P^1\to \mathbb R\cup \{-\infty\}$ with $\omega_{FS} +dd^c\psi\ge 0$ and $\nu_{z=0}\psi \ge 1$ is, up to an additive constant, equal to
$$ \zeta(z) := \ln\left(\frac{|z|^2}{1+|z|^2}\right)\text{ on } \mathbb C_z.$$
To see this observe first that we certainly cannot have $\nu_{z=0}(\psi)>1$ since we have normalised so $\int_{\mathbb P^1}\omega_{FS}=1$.  Thus we may assume $\nu_{z=0}\psi =1$.  Then observe that $\zeta$ is $\omega_{FS}$-harmonic on $\mathbb C_{z}\setminus \{0\}$, and that the difference $\psi-\zeta$ is bounded near $0$.  Thus  $\psi-\zeta$ extends to a bounded subharmonic function on all of $\mathbb C_z$, and hence is constant by the Liouville property.    Thus the envelope $\psi_1$ from Definition \ref{def:defofpsi} must be
$$\psi_1 = \zeta + C$$
where $C$ is the largest constant one can choose so that $\psi_1\le \phi$.   Now on $\mathbb C_w$ we have
$$ \psi_1(w) = -\ln (1+|w|^2) + C$$
and so as $\gamma$ is non-trivial our hypthesis force $C=0$. Thus 
$$\Omega_1  = \{ -\ln (1+|w|^2)< \phi(w)\}= \mathbb P^1\setminus \gamma.$$\end{proof}

\subsection{Weak solutions to the HMAE}

We now discuss the weak solution to two versions of the Dirichlet Problem for the complex HMAE, first over the disc and second over the punctured disc (this follows the discussion in \cite{RW}).    Again we let $\phi\in C^{\infty}(\mathbb P^1)$ be such that $\omega_{FS}+ dd^c\phi>0$.

\begin{definition}\

\begin{enumerate}
\item Let
\begin{equation*}\label{eq:weaksolutionD}
 \Phi := \sup 
 \left\{
\begin{array}{c}
  \psi \colon  \mathbb P^1\times \overline{\mathbb D}\to \mathbb R\cup \{-\infty\} : \psi \text{ is usc, }\pi_{\mathbb P^1}^*\omega_{FS} + dd^c\psi \ge 0 \\\text{ and } \psi(z,\tau)\le \phi(\tau z) \text{ for } (z,\tau)\in \mathbb P^1\times  \partial\mathbb D
 \end{array}
 \right\}.
\end{equation*}
\item Let
\begin{equation}\label{eq:weaksolutionDtimes}
\tilde{\Phi} :=\sup \left\{
\begin{array}{c}
\psi \colon \mathbb P^1\times \overline{\mathbb D}\to \mathbb R\cup \{-\infty\} : \psi\text{ is usc, } \pi_{\mathbb P^1}^*\omega_{FS} + dd^c\psi\ge 0 \\\text{and } \psi(z,\tau)\le \phi(z) \text{ for } (z,\tau)\in \mathbb P^1\times \partial \mathbb D \text{ and } \nu_{(z=0,\tau=0)}(\psi)\ge 1
\end{array}
\right\}.
\end{equation}
\end{enumerate}
\end{definition}

The function $\Phi$ is the weak solution to the complex HMAE with boundary data $\phi(\tau z)$ for $\tau \in \partial \mathbb D$, that is the solution to equation \eqref{eq:hmae}.    Similarly $\tilde{\Phi}$ is the weak solution to the Dirichlet problem with boundary data $\phi(z)$, but with the additional requirement of having a prescribed singularity at the point $p:= (0,0)\subset \mathbb C_z\times \mathbb D\subset \mathbb P^1\times \mathbb D$.  That is, $\tilde{\Phi}$ is usc,  $\pi_{\mathbb P^1}^*\omega_{FS} + dd^c\tilde{\Phi}\ge 0$ and $(\pi_{\mathbb P^1}^*\omega_{FS} + dd^c\tilde{\Phi})^2=0$ away from $p$ and $\tilde{\Phi}(z,\tau) = \phi(z)$ for $\tau \in \partial \mathbb D$.   Moreover it is not hard to show that $\tilde{\Phi}$ is locally bounded away from $p$ and $\nu_{p}\tilde{\Phi}=1$.       These two quantities carry the same information as given by:

\begin{proposition} \label{prop:equiv}
We have that $$\Phi(z,\tau)+\ln|\tau|^2+\ln(1+|z|^2)=\tilde{\Phi}(\tau z,\tau)+\ln(1+|\tau z|^2) \text{ for } (z,\tau)\in \mathbb P^1\times \overline{\mathbb D}^{\times}.$$
\end{proposition}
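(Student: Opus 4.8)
The plan is to exhibit an explicit order isomorphism between the two families of competitors defining $\tilde{\Phi}$ and $\Phi$, implemented by the biholomorphism $F(z,\tau)=(\tau z,\tau)$ of $\mathbb P^1\times\mathbb D^\times$ together with the additive correction $-\ln|\tau|^2$, and then to observe that the claimed identity is nothing more than the statement that this map intertwines the two suprema. To set this up I would pass to local potentials on the chart $\mathbb C_z$: a usc function $\psi$ on $\mathbb P^1\times\overline{\mathbb D}$ satisfies $\pi_{\mathbb P^1}^*\omega_{FS}+dd^c\psi\ge 0$ if and only if $u:=\ln(1+|z|^2)+\psi$ is plurisubharmonic on $\mathbb C_z\times\overline{\mathbb D}$, and writing $u_\Phi=\ln(1+|z|^2)+\Phi$ and $u_{\tilde\Phi}(\tau z,\tau)=\ln(1+|\tau z|^2)+\tilde\Phi(\tau z,\tau)$, the claim is exactly
\[
u_\Phi(z,\tau)=u_{\tilde\Phi}(\tau z,\tau)-\ln|\tau|^2 .
\]
Because the assignment $u\mapsto u(\tau z,\tau)-\ln|\tau|^2$ acts pointwise (its value at $(z,\tau)$ depends only on the value of $u$ at $(\tau z,\tau)$, up to the additive constant $-\ln|\tau|^2$) and adding a fixed function commutes with taking suprema, it suffices to prove that this assignment restricts to a bijection from the competitor family for $\tilde{\Phi}$ onto the competitor family for $\Phi$; the identity then follows by applying it to the envelopes.

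I would then verify the forward direction. Given a competitor $\tilde\psi$ for $\tilde\Phi$, put $\tilde u=\ln(1+|w|^2)+\tilde\psi$ and define $u(z,\tau):=\tilde u(\tau z,\tau)-\ln|\tau|^2$ on $\mathbb C_z\times\mathbb D^\times$. Since $F$ is holomorphic and $-\ln|\tau|^2$ is pluriharmonic on $\mathbb D^\times$, the function $u$ is psh there, and the boundary constraint transports correctly: for $|\tau|=1$ one has $\ln|\tau|^2=0$ and $|\tau z|=|z|$, so $\psi:=u-\ln(1+|z|^2)$ equals $\tilde\psi(\tau z,\tau)\le\phi(\tau z)$, which is precisely the constraint defining $\Phi$. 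The one substantive point is the extension across $\{\tau=0\}$: the hypothesis $\nu_{(0,0)}(\tilde\psi)\ge 1$ gives $\tilde u(w,\sigma)\le\ln(|w|^2+|\sigma|^2)+O(1)$ near the origin, whence $u(z,\tau)\le\ln(1+|z|^2)+O(1)$ is locally bounded above near $\tau=0$, and so $u$ extends to a psh function across the divisor $\{\tau=0\}$ by the standard removable-singularity theorem for psh functions that are bounded above along a divisor. The extended $\psi$ is then a competitor for $\Phi$.

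For the reverse direction I would run the same computation through $F^{-1}(w,\sigma)=(w/\sigma,\sigma)$: given a competitor $\psi$ for $\Phi$, set $\tilde u(w,\sigma):=u(w/\sigma,\sigma)+\ln|\sigma|^2$, which is psh on $\mathbb C_w\times\mathbb D^\times$ and satisfies $\tilde\psi\le\phi(w)$ on $\partial\mathbb D$ by the same $|\sigma|=1$ computation. Here the roles of the two formulations are exchanged: since $\psi$ is usc on the compact space $\mathbb P^1\times\overline{\mathbb D}$ it is bounded above by some constant $C$, so $\tilde u(w,\sigma)\le\ln(|w|^2+|\sigma|^2)+C$ near the origin, and after extending $\tilde u$ across $\{\sigma=0\}$ this is exactly the Lelong bound $\nu_{(0,0)}(\tilde\psi)\ge 1$; thus $\tilde\psi$ is a competitor for $\tilde\Phi$. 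As the two assignments are manifestly inverse on $\mathbb D^\times$, they furnish the desired bijection.

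The main obstacle is the behaviour at the singular point $p=(0,0)$. The heart of the matter is showing that the prescribed Lelong number for $\tilde\Phi$ is exactly what is needed to extend the transformed competitor across $\{\tau=0\}$ for $\Phi$, and, conversely, that the automatic boundedness of competitors for $\Phi$ reproduces the Lelong bound for $\tilde\Phi$; the additive term $-\ln|\tau|^2$ is precisely what converts the prescribed singularity into regularity across $\{\tau=0\}$ and back. Everything else — preservation of positivity under the biholomorphism $F$, the matching of boundary data on $\partial\mathbb D$, and the compatibility of the correspondence with upper-semicontinuous regularisation (which holds because $F$ is a homeomorphism and $-\ln|\tau|^2$ is continuous on $\mathbb D^\times$) — is a routine verification.
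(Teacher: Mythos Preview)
Your proposal is correct and follows essentially the same approach as the paper, which simply observes that each side of the identity, suitably rewritten, is a candidate for the envelope defining the other and leaves the details implicit. You have filled in precisely those details---in particular the extension of the transformed competitor across $\{\tau=0\}$ via the removable-singularity theorem and the equivalence between the Lelong-number constraint for $\tilde\Phi$ and the automatic boundedness of competitors for $\Phi$---which are indeed the only points requiring care.
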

\begin{proof}
This is easily seen from the definition that $\Phi(z,\tau) + \ln |\tau|^2 + \ln (1+|z|^2) - \ln (1+|\tau z|^2)$ is a candidate for the envelope defining $\tilde{\Phi}(\tau z,\tau)$, giving one inequality and the other inequality is proved similarly.
\end{proof}

\subsection{Proof of Theorem \ref{thm:mainprecise}}

Without loss of generality we assume the arc $\gamma$ goes through the point $w=0$.   By Lemma \ref{lem:omega1}
$$\psi_1(z) = \ln \left(\frac{|z|^2}{1+|z|^2}\right)$$
and 
$$\Omega_1 = \mathbb P^1\setminus \gamma.$$
Looking at the other coordinate patch $\mathbb C_z$, we have that $\gamma$ is a curve passing through infinity, and so $\mathbb C_z\setminus\gamma$ is an open, simply connected proper subset of $\mathbb C_z$.  Hence by the Riemann-mapping theorem there is a biholomorphism
$$ f:\mathbb D\to \mathbb C_z\setminus \gamma \text{ with } f(0) = 0.$$
For $\tau \in \mathbb D^\times$ set
$$ A_{\tau} := f(\mathbb D_{|\tau|})\subset \mathbb C_z\subset \mathbb P^1.$$
Clearly each $A_{\tau}$ is a proper subset of $\mathbb C_{z}$ containing the origin, whose complement has non-empty interior.  

\begin{proposition}\label{prop:main}
We have
$$ \tilde{\Phi}(z,\tau)  = \psi_1(z) \text{ for all } \tau\in \mathbb D^{\times} \text{ and } z\in \mathbb P^1\setminus A_\tau.$$
\end{proposition}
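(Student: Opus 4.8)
The plan is to prove the two inequalities $\tilde{\Phi}\ge \psi_1$ (everywhere) and $\tilde{\Phi}(z,\tau)\le \psi_1(z)$ (on the stated region) separately, the first being immediate and the second carrying all the content. For the lower bound I would simply observe that the $\tau$-independent function $(z,\tau)\mapsto \psi_1(z)$ is itself a candidate in the envelope \eqref{eq:weaksolutionDtimes} defining $\tilde{\Phi}$. Indeed it is usc and $\pi_{\mathbb P^1}^*\omega_{FS}+dd^c\psi_1\ge 0$ by Lemma \ref{lem:omega1}; it satisfies $\psi_1\le \phi$, so the boundary inequality holds in particular for $|\tau|=1$; and since $\psi_1(z)=\ln|z|^2+O(1)$ near $z=0$ while $\ln|z|^2\le \ln(|z|^2+|\tau|^2)$, its Lelong number at $p=(0,0)$ is at least $1$. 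Hence $\tilde{\Phi}\ge \psi_1$ on all of $\mathbb P^1\times\overline{\mathbb D}$, and it remains to prove the reverse inequality on $\{z\notin A_\tau\}$.

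The upper bound is where the geometry of $f$ and $\gamma$ enters, and I would first dispatch the easy locus $z\in\gamma$. For a fixed $w\in\gamma$ the vertical disc $\tau\mapsto(w,\tau)$ is holomorphic, and for any candidate $\psi$ the function $\tau\mapsto \psi(w,\tau)+\ln(1+|w|^2)$ is subharmonic with boundary values at $|\tau|=1$ bounded by $\phi(w)+\ln(1+|w|^2)=\psi_1(w)+\ln(1+|w|^2)$, where I use that $\phi(w)=-\ln(1+|w|^2)=\psi_1(w)$ on $\gamma$. The maximum principle then gives $\psi\le \psi_1$, and hence $\tilde{\Phi}\le\psi_1$, on $\gamma\times\overline{\mathbb D}$.

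For a point $z_0=f(\sigma_0)\notin A_{\tau_0}$ with $z_0\notin\gamma$ (so $|\sigma_0|\ge|\tau_0|$) I would run a maximum principle along the holomorphic disc
\[
h:\overline{\mathbb D}\to \mathbb P^1\times\overline{\mathbb D},\qquad h(\lambda)=\Bigl(f(\lambda),\tfrac{\tau_0}{\sigma_0}\lambda\Bigr),
\]
which is well defined because $|\tau_0/\sigma_0|\le 1$, passes through $(z_0,\tau_0)$ at $\lambda=\sigma_0$, hits the pole $p$ at $\lambda=0$, and sends $\partial\mathbb D$ into $\gamma\times\overline{\mathbb D}$ since the boundary values of the Riemann map $f$ lie on $\gamma$. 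For any candidate $\psi$ the function $\lambda\mapsto \psi(h(\lambda))-\psi_1(f(\lambda))$ equals $\psi(h(\lambda))+\ln(1+|f(\lambda)|^2)-\ln|f(\lambda)|^2$, which is subharmonic on $\mathbb D\setminus\{0\}$ because $\psi+\pi_{\mathbb P^1}^*\rho$ is psh and $\ln|f(\lambda)|^2$ is harmonic away from the simple zero of $f$ at $\lambda=0$. On $\partial\mathbb D$ it is $\le 0$ by the previous paragraph, while near $\lambda=0$ the Lelong condition $\nu_p(\psi)\ge 1$ gives $\psi(h(\lambda))\le \ln|\lambda|^2+O(1)$, matching the logarithmic pole of $\ln|f(\lambda)|^2$, so the function is bounded above and extends subharmonically across $0$. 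The maximum principle then forces it to be $\le 0$ throughout, and evaluating at $\lambda=\sigma_0$ yields $\psi(z_0,\tau_0)\le \psi_1(z_0)$. Taking the supremum over candidates gives $\tilde{\Phi}(z_0,\tau_0)\le \psi_1(z_0)$, which together with the lower bound and the case $z\in\gamma$ completes the proof.

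The main obstacle, and the step I would treat most carefully, is the boundary behaviour of the Riemann map $f$: I need enough control (in the sense of cluster values) that $h(\partial\mathbb D)\subset \gamma\times\overline{\mathbb D}$, and I must accommodate the point $z=\infty$ (equivalently $w=0\in\gamma$) through which $\gamma$ passes, where $\ln(1+|f(\lambda)|^2)$ and $\ln|f(\lambda)|^2$ individually blow up. Phrasing the comparison through the combination $\psi(h(\lambda))-\psi_1(f(\lambda))$, rather than the two divergent pieces, is what makes this harmless, since $\psi_1(f(\lambda))\to 0$ there. Everything else is a routine application of the subharmonic maximum principle once the comparison function $\ln|f(\lambda)|^2$ — which is tuned to match both the boundary values on $\gamma$ and the logarithmic pole coming from $\psi_1$ and from the Lelong condition — has been identified.
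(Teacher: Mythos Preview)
Your argument is correct and reaches the same conclusion, but the route differs from the paper's. Both proofs begin with the easy inequality $\tilde{\Phi}\ge\psi_1$. For the reverse direction the paper uses a \emph{single} holomorphic disc, the graph $\tau\mapsto(f(\tau),\tau)$, and compares $\tilde{\Phi}$ with $\psi_1$ along it to obtain $\tilde{\Phi}(f(\tau),\tau)=\psi_1(f(\tau))$ for all $\tau\in\mathbb D$; this gives equality on $\partial A_\tau$ for every fixed $\tau$, and then a \emph{fibrewise} maximum principle on the compact set $\mathbb P^1\setminus A_\tau$ (where $\psi_1$ is bounded and $\omega_{FS}$-harmonic while $\tilde{\Phi}(\cdot,\tau)$ is $\omega_{FS}$-subharmonic) finishes the job. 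You instead first establish the inequality on $\gamma\times\overline{\mathbb D}$ via vertical discs, and then thread a rescaled graph disc $\lambda\mapsto(f(\lambda),(\tau_0/\sigma_0)\lambda)$ through each target point individually, running the comparison disc by disc for every candidate $\psi$.

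The paper's decomposition is more economical---one disc argument plus one clean fibrewise step---and the boundary behaviour of $f$ enters only in the mild form that $\phi(f(\tau))-\psi_1(f(\tau))\to 0$ as $|\tau|\to 1$, which follows already from properness of the Riemann map together with continuity of $\phi-\psi_1$ on $\mathbb P^1$. Your approach is closer in spirit to sweeping the whole degenerate region by analytic discs and has the minor advantage of working directly with arbitrary candidates $\psi$; the price is exactly the obstacle you flag, namely that your boundary step needs the cluster set of $f$ on $\partial\mathbb D$ to lie in $\gamma$, which is available here (Carath\'eodory, since $\gamma$ is a smooth arc) but is a stronger input than the paper uses.
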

\begin{proof}
By abuse of notation we write $\psi_1$ also for the pullback of $\psi_1$ to $\mathbb P^1\times \overline{\mathbb D}$.  Then
\begin{equation}
\tilde{\Phi}(z,\tau) \ge \psi_1(z) \text{ for } (z,\tau)\in \mathbb P^1\times \overline{\mathbb D}\label{eq:oneineq}
\end{equation}
since $\psi_1$ is a candidate for the envelope \eqref{eq:weaksolutionDtimes} defining $\tilde{\Phi}$.    

We next claim that\begin{equation} \label{eq:disceq}
\tilde{\Phi}(f(\tau),\tau)  = \psi_1(f(\tau)) \text{ for all } \tau \in \mathbb D.
\end{equation} 
To see this observe that $\tau\mapsto \tilde{\Phi}(f(\tau),\tau)$ is $f^*\omega_{FS}$-subharmonic and has Lelong number 1 at $\tau=0$.  On the other hand $\psi_1(f(\tau))$ is $f^*\omega_{FS}$-harmonic except at $\tau=0$ where it has Lelong number 1.  But $\tilde{\Phi}(f(\tau),\tau)$ tends to $\psi_1(f(\tau))$ as $|\tau|$ tends to $1$, and hence from the maximum principle along with \eqref{eq:oneineq} we get \eqref{eq:disceq}.

Now fix some $\tau \in \mathbb D^{\times}$ and set
$$\phi_\tau(z): = \tilde{\Phi}(z,\tau).$$
Then the above says that $\phi_{\tau} = \psi_1$ on $\partial A_{\tau}$.  On the other hand by \eqref{eq:oneineq} we have $\phi_{\tau} \ge \psi_1$ everywhere.  Moreover $\phi_{\tau}$ is $\omega_{FS}$-subharmonic on $A_{\tau}^c$ whereas $\psi_1$ is bounded and $\omega_{FS}$-harmonic on $A_{\tau}^c$.  Thus by the maximum principle we deduce $\phi_{\tau} = \psi_1$ on $A_{\tau}^c$ as required.





\end{proof}

\begin{proof}[Proof of Theorem \ref{thm:mainprecise}]
Set 
$$S := \{ (z,\tau)\in \mathbb P^1\times \mathbb D^{\times} : (\tau z,\tau)\in (A_{\tau}^c)^{\circ}\}$$
which is non-empty and open in $\mathbb P^1\times \mathbb D^{\times}$.  Then by Proposition \ref{prop:equiv} and then Proposition \ref{prop:main} if $(z,\tau) \in S$ we have

$$\Phi(z,\tau)= \tilde{\Phi}(\tau z, \tau) + \ln \left(\frac{1 + |\tau z|^2}{|\tau|^2(1+|z|^2)}\right) = 
\psi_1(\tau z) + \ln \left(\frac{1 + |\tau z|^2}{|\tau|^2(1+|z|^2)}\right).$$
Thus on $S$ we have
$$\pi_{\mathbb P^1} \omega_{FS} + dd^c \Phi = \pi_{\mathbb P^1} \omega_{FS} + dd^c\psi_1(\tau z) =0$$
as $\psi_1$ is $\omega_{FS}$-harmonic away from $z=0$.
\end{proof}

\subsection{A specific example}\label{sec:example}
We now construct  a specific potential $\phi$ that satisfies the hypothesis of Theorem \ref{thm:mainprecise}.   Fix $\gamma$ to be the interval $[-1,1]\subset \mathbb R\subset \mathbb C_w$.  Our goal is to find a $\phi\in C^{\infty}(\mathbb P^1)$ such that $\omega_{FS} + dd^c\phi>0$ and $\phi\ge - \ln (1+|w|^2)$ with equality precisely on $\gamma$.

To do so, let $\alpha:\mathbb R\to \mathbb R$ be a non-negative smooth non-decreasing convex function with $\alpha(t) =0$ for $t\le 1$ and $\alpha(t)>0$ for $t>1$.  Let
$$u(w): = \alpha(|w|^2)  + \operatorname{Im}(w)^2.$$
Thus $u$ is a smooth strictly subharmonic function on $\mathbb C_w$ that vanishes precisely on $\gamma$.  Then $\epsilon u-\ln (1+|w|^2)$  for some small constant $\epsilon>0$ is essentially the function that we want, we simply need to adjust it to have the correct behaviour far away from $\gamma$.  

 To do so we shall use a regularised version of the maximum function, which can be explicitly given as follows: Let $|\cdot|_{\operatorname{reg}}$ be a smooth convex function on $\mathbb R$ so that $|t|_{\operatorname{reg}} = |t|$ for $|t|\ge 1$.  Set $ \operatorname{max}_{\operatorname{reg}}(a,b) : = \frac{1}{2} ( |a-b|_{\operatorname{reg}} + a + b)$
and for $\delta>0$ put
\begin{equation} \operatorname{max}_\delta(a,b) := \delta \operatorname{max}_{\operatorname{reg}}(\delta^{-1} a, \delta^{-1}b).\label{eq:regmax}
\end{equation}
Then $\max_{\delta}(\cdot,\cdot)$ is smooth, and satisfies
$$ \operatorname{max}_{\delta}(a,b) = \left\{ \begin 
{array}{ll} a & \text{ if } a>b + \delta\\
 b & \text{ if } b> a+\delta.
 \end{array} \right.$$

Returning to the construction of $\phi$, fix sufficiently large constant $C$ and sufficiently small positive constant $\epsilon$ so that
\begin{align*}
 \epsilon u &\ge  \ln ( 1+ |w|^2) - C +1\text{ on } \mathbb D_{2}\\
 \epsilon u& \le \ln (1 + |w|^2) -C-1 \text{ on } \mathbb D_{4}\setminus \mathbb D_{3}.
 \end{align*}
Then for $0<\delta\ll 1$ set
 $$v: = \operatorname{max}_{\delta} (\epsilon u, \ln(1+|w|^2) -C).$$
 So $v$ is smooth, non-negative, strictly subharmonic,  equal to $\ln(1+|w|^2)-C$ on $\mathbb D_{4}\setminus \mathbb D_3$ and vanishes precisely on $\gamma$.   We then put
 $$\phi: = v - \ln(1+|w|^2)$$
 and extend $\phi$ to take the constant value $C$ in $\mathbb C_{w}\setminus \mathbb D_4$.  So $\phi$ extends to a smooth function over $\mathbb P^1$ with the desired properties.

\section{Discussion}\label{sec:discussion}


\subsection{What's going on?}  
Fix a $\phi\in C^{\infty}(\mathbb P^1)$ such that $\omega_{FS} + dd^c\phi>0$.  Then associated to $\phi$ we have two constructions:
\begin{enumerate}
\item The solution $\tilde{\Phi}$ to the complex HMAE on $\mathbb P^1\times \overline{\mathbb D}$ with boundary data given by $\phi_\tau = \phi$ for all $\tau\in \partial \mathbb D$ and the requirement of having Lelong number 1 at the point $(z,\tau) = (0,0)\in \mathbb C_z\times \mathbb D\subset \mathbb P^1\times \overline{\mathbb D}$.
\item The envelopes $\psi_t$ for $t\in (0,1]$ and the associated sets $\Omega_t(\phi)= \{ \psi_t<\phi\}$.
\end{enumerate}
In previous work of the authors we show that these sets of data are intimately connected.  First, $\tilde{\Phi}$ and $\psi_t$ are Legendre dual to each other \cite[Theorem 2.7]{RW} in that
\begin{equation} \label{legendre1}
\psi_t(z)=\inf_{|\tau|>0}\{\tilde{\Phi}(z,\tau)-(1-t)\ln|\tau|^2\}
\end{equation} 
and 
\begin{equation} \label{legendre2}
\tilde{\Phi}(z,\tau)=\sup_{t}\{\psi_{t}(z)+(1-t)\ln |\tau|^2\}.
\end{equation}
Second, the collection of sets $\Omega_t(\phi)$ that are biholomorphic to a disc describe the harmonic discs of $\tilde{\Phi}$.  That is, if $t$ is such that $\Omega_t(\phi)$ is a proper simply connected subset of $\mathbb C_z$ and $f:\mathbb D\to \Omega_t$ is a Riemann-map with $f(0) =0$ then the restriction of $\tilde{\Phi}$ to the graph $\{ (f(\tau),\tau)\in \mathbb P^1\times \mathbb D\}$ is $\omega_{FS}$-harmonic.  Furthermore it is shown in \cite[Theorem 3.1]{RW} these are (essentially) the only harmonic discs that occur.

We can say more.  For $\tau\in \mathbb D^{\times}$ set  
$$\phi_{\tau}(z): = \tilde{\Phi}(z,\tau).$$    If $\tilde{\Phi}$ is regular then each $\phi_{\tau}$ will be smooth K\"ahler potential, but in general this will not be the case. Nevertheless, by the work of B\l ocki \cite{Blocki}  we know $\phi_{\tau}$ is $C^{1,1}$ and since $\pi_{\mathbb P^1}^*\omega_{FS} + dd^c\tilde{\Phi}\ge 0$ we know $\omega_{FS} + dd^c\phi_{\tau}$ is semipositive.  We can then define the associated sets $\Omega_t(\phi_{\tau})$ in exactly the same way as before.

\begin{proposition}\label{prop:innerdisc}
Suppose $t$ is such that $\Omega_t(\phi)\subset \mathbb C_z$ is proper and simply connected and let $f_t:\mathbb D\to \Omega_t(\phi)$ be a Riemann-map with $f(0)=0$.  Then for each $\tau\in \mathbb D^\times$ we have
$$ f_t(\mathbb D_{|\tau|}) = \Omega_t(\phi_{\tau}).$$
\end{proposition}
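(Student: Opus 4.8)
The plan is to relate the envelope $\Omega_t(\phi_\tau)$ for the fibre potential $\phi_\tau = \tilde{\Phi}(\cdot,\tau)$ to the envelope $\Omega_t(\phi)$ for the boundary data $\phi$, using the Legendre duality \eqref{legendre1} and \eqref{legendre2} as the main bridge. The heuristic is that on the fibre over $\tau$ the solution $\tilde{\Phi}$ already encodes the partial envelope ``seen'' up to the level $|\tau|$, so the harmonic disc $f_t(\mathbb D)$ of the full boundary data gets truncated to the subdisc $f_t(\mathbb D_{|\tau|})$ when viewed in the single fibre. Concretely I would first compute $\psi_s(\phi_\tau)$ in terms of the family $\psi_s(\phi)$. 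Fixing $\tau$ and applying \eqref{legendre1} to the potential $\phi_\tau$ requires understanding the solution $\tilde{\Phi}_\tau$ for the Dirichlet problem with boundary data $\phi_\tau$; the key observation is that by the nesting structure of the solution, this inner solution over $\mathbb D_{|\tau|}$ coincides (after rescaling the disc variable) with a restriction of the original $\tilde{\Phi}$. I would make this precise by a rescaling argument: writing the defining envelope for the inner problem over $\mathbb{D}_{|\tau|}$ and matching it against the restriction of $\tilde\Phi$, using that $\tilde\Phi$ solves the HMAE and agrees with $\phi_\tau$ on the boundary $\partial\mathbb D_{|\tau|}$.

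The key steps, in order, are as follows. First, I would establish that for the single-fibre problem the relevant harmonic discs are characterized exactly as in the statement preceding the proposition: if $\Omega_s(\phi_\tau)$ is proper and simply connected with Riemann map $g_s$, then $\tilde{\Phi}_\tau$ restricted to the graph $\{(g_s(\sigma),\sigma)\}$ is $\omega_{FS}$-harmonic, and these are essentially all the harmonic discs. Second, I would identify the inner solution $\tilde{\Phi}_\tau$ over $\mathbb{D}_{|\tau|}$ with the restriction of the global $\tilde\Phi$ to $\mathbb{P}^1\times\overline{\mathbb{D}}_{|\tau|}$, after checking boundary data and the prescribed singularity match. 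Third, transporting the known harmonic disc of $\tilde\Phi$ associated to $\Omega_t(\phi)$, namely $\tau\mapsto f_t(\tau)$, through this identification shows that the truncation $\tau'\mapsto f_t(\tau')$ for $|\tau'|\le|\tau|$ is precisely the harmonic disc of $\tilde\Phi_\tau$ at level $t$. Fourth, by the characterization of harmonic discs via envelopes (applied now to $\phi_\tau$), this identifies the image $f_t(\mathbb{D}_{|\tau|})$ with $\Omega_t(\phi_\tau)$.

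I expect the main obstacle to be the regularity and well-definedness issues hiding in the single-fibre envelopes $\Omega_t(\phi_\tau)$: unlike $\phi$, the potential $\phi_\tau = \tilde\Phi(\cdot,\tau)$ is only known to be $C^{1,1}$, so one must verify that the envelope machinery of Definition \ref{def:defofpsi} and the harmonic-disc description still apply with merely semipositive, non-smooth data. A related subtlety is confirming that the level parameter $t$ is preserved under the identification of the inner solution with the restriction of $\tilde\Phi$; this hinges on the normalization of the Lelong number at the puncture, which is scale-invariant under the rescaling $\tau\mapsto\tau/|\tau_0|$ but should be double-checked against the $(1-t)$-weighting in \eqref{legendre2}. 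Finally I would need to argue that the global harmonic disc $f_t$ of $\tilde\Phi$ really does restrict to the inner harmonic disc rather than to some larger or differently-parametrized curve, which is where the simple connectivity and the Riemann-map normalization $f(0)=0$ are essential, since these pin down the disc uniquely up to rotation.
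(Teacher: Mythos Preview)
Your outline shares the right skeleton with the paper: reduce to the inner Dirichlet problem over $\overline{\mathbb D}_{|\tau|}$, identify its solution with the restriction $\tilde\Phi|_{\overline{\mathbb D}_{|\tau|}}$, and transport the harmonic disc $\tau\mapsto f_t(\tau)$ inward. The paper does exactly your Step~2 and Step~3 (after first using the $S^1$-invariance of $\tilde\Phi$ to reduce to real $\tau=r$).

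Where your plan diverges, and where the real gap lies, is Step~4. You propose to invoke the correspondence ``harmonic discs $\leftrightarrow$ simply connected $\Omega_s$'' for the fibre potential $\phi_\tau$ and read off $\Omega_t(\phi_\tau)=f_t(\mathbb D_{|\tau|})$. But that correspondence, as stated, runs in the direction $\Omega_s \Rightarrow$ harmonic disc; to go backwards you would need both a uniqueness statement and, crucially, a way to recover the \emph{level} $s$ from the disc. You flag this yourself (``confirming that the level parameter $t$ is preserved''), but the Lelong-number normalization at the puncture does not by itself distinguish the levels: every harmonic disc through the origin sees the same Lelong number~$1$ there, regardless of $t$. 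So as written, your Step~4 does not close.

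The paper resolves precisely this point with a concrete device you do not mention: the function $H(z,\tau)=\frac{\partial}{\partial s}\tilde\Phi(z,e^{-s/2})$, $s=-\ln|\tau|^2$. Two facts from \cite{RW} are invoked: (i) $H\equiv t-1$ along the harmonic disc $\{(f_t(\tau),\tau)\}$, and (ii) for each fixed $r$, $H(\cdot,r)+1=\sup\{s: z\notin\Omega_s(\phi_r)\}$, a statement whose proof is noted not to require smoothness or strict positivity of $\phi_r$. Combining these with $S^1$-invariance gives that $H(\cdot,r)=t-1$ exactly on $\partial f_t(\mathbb D_r)$, which identifies $f_t(\mathbb D_r)$ as the sublevel set $\Omega_t(\phi_r)$. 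This function $H$ is the missing bridge between your Step~3 and your Step~4; the Legendre formulas \eqref{legendre1}--\eqref{legendre2} you intended to use encode the same information, but it is the differentiated form $H$ and the pointwise identity (ii) that make the level-matching go through without circularity and without assuming a priori that $\Omega_t(\phi_\tau)$ is simply connected.
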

 We shall give a proof of this fact below, but assuming it for now we can give an alternative proof that, under the hypothesis of Theorem \ref{thm:mainprecise}, for each $\tau\in \mathbb D^\times$ the current $\omega_{FS} + dd^c\tilde{\Phi}(\cdot,\tau)$ is degenerate on some non-empty open subset of $\mathbb P^1$.   First Lemma \ref{lem:omega1} gives 
 $$\Omega_1(\phi) =\mathbb P^1\setminus \gamma$$
which is a simply connected proper subset of $\mathbb C_z.$
We then take our Riemann-map $f:\mathbb D\to \Omega_1(\phi)$ and consider the image
$$ A_\tau: = f(\mathbb D_{|\tau|}) = \Omega_1(\phi_\tau)\text{ for } \tau\in \mathbb D^{\times}.$$
As observed before, $A_{\tau}$ is a proper subset of $\mathbb C_{z}$ whose complement has non-empty interior.  

On the other hand, it is a general fact that for each $t$ the set  $\Omega_t(\phi_\tau)$ has measure $t$ with respect to the current $\omega_{FS} + dd^c\phi_\tau$.  (If $\phi_\tau$ is smooth and $\omega_{FS} + dd^c\phi_\tau$ is strictly positive this is a standard piece of potential theory and discussed in  \cite[Proposition 1.1]{RW}; when $\phi_\tau$ is merely $C^2$ and $t<1$ then this is proved in \cite[Theorem 1.2]{RWEnvelopes} and the case $t=1$ follows from this by continuity as $\Omega_1(\phi_\tau)=\cup_{t<1}\Omega_t(\phi_\tau)$; finally when $\phi_\tau$ is merely $C^{1,1}$ this is given in \cite[Remark 1.19, Corollary 2.5]{BermanDemailly}.)  

Therefore
$$\int_{A_\tau} (\omega_{FS} + dd^c\phi_\tau) = \int_{\Omega_1(\phi_\tau)} (\omega_{FS} + dd^c\phi_\tau) = 1.$$
But our normalisation is that $\int_{\mathbb P^1} (\omega_{FS} + dd^c\phi_{\tau}) = \int_{\mathbb P^1} \omega_{FS} =1$ as well, and so the current $\omega_{FS} + dd^c\phi_{\tau}$ gives zero measure to the complement of $A_{\tau}$, which is precisely what we were aiming to prove.

\begin{proof}[Proof of Proposition \ref{prop:innerdisc}]
Fix $\sigma\in \mathbb D^{\times}$ and set $r: = |\sigma|$.  Our aim is to show
$$ f_t(\mathbb D_r) = \Omega_t(\phi_\sigma).$$

 Consider the $S^1$-action on $\mathbb P^1\times \overline{\mathbb D}$ given by $e^{i\theta}\cdot (z,\tau) = (z,e^{i\theta}\tau)$, and observe that the boundary data used to define $\tilde{\Phi}$ from \eqref{eq:weaksolutionDtimes} is $S^1$-invariant, which implies $\tilde{\Phi}$ is $S^1$-invariant as well.   Thus we may as well assume that $\sigma$ is real, so $\phi_{\sigma} = \phi_r$.  

For a function $F$ on $\mathbb P^1\times \overline{\mathbb D}$ and $D\subset \overline{\mathbb D}$ we write $F|_{D}$ for the restriction of $F$ to $\mathbb P^1\times D$. Then $\tilde{\Phi}|_{\overline{\mathbb D}_r}$ is the solution to the Dirichlet problem for the HMAE with boundary data $(\phi_\tau)_{\tau\in \partial \mathbb D_r} = \phi_{r}$ and the requirement that $\tilde{\Phi}|_{\overline{\mathbb D}_r}$ has Lelong number 1 at the point $(0,0)\in \mathbb C_z\times \mathbb D_r\subset \mathbb P^1\times \mathbb D_r$.

Letting $s:=-\ln |\tau|^2$ consider the function on $\mathbb P^1\times \overline{\mathbb D}^\times$ given by
 $$H(z,\tau):=\frac{\partial}{\partial s} \tilde{\Phi}(z,e^{-s/2}),$$ (when $|\tau|=1$ and thus $s=0$ we take the right derivative). As $\tilde{\Phi}$ is $C^{1,1}$ on $\mathbb{P}^1\times \overline{\mathbb{D}}^{\times}$ the function $H$ is well-defined and Lipschitz.    Clearly this is compatible with restriction, i.e.\
 $$ H|_{\overline{\mathbb D}^\times_r}(z,\tau) = \frac{\partial}{\partial s} \tilde{\Phi}|_{\overline{\mathbb D}_r}(z,e^{-s/2}).$$

Now, as discussed above, and proved in  \cite[Theorem 3.1]{RW}, the graph $\{ (f(\tau),\tau) : \tau \in \mathbb D\}$ of $f$ is a harmonic disc for $\tilde{\Phi}$.  What is also proved is that $H$ takes the constant value $t-1$ along this disc so
$$ H(f(\tau),\tau) = t-1 \text{ for }  \tau \in \mathbb D^\times.$$
Now $H$ is also $S^1$-invariant and so this in particular implies
$$ H(f(re^{i\theta}),r)  = H(f(re^{i\theta}), re^{i\theta}) = t-1 \text{ for all } \theta \in [0,2\pi].$$
In other words the function $H(\cdot, r)$
takes the value $t-1$ on the boundary of $f(\mathbb D_r)$.    On the other hand, we prove in  \cite[Proposition 2.9]{RW} that the function $H(\cdot, r)$ describes the set $\Omega_t(\phi_r)$, in that
$$ H(z,r) + 1 = \sup\{ s: z\notin \Omega_s(\phi_r)\}$$
(we remark that the proof of \cite[Proposition 2.9]{RW} does not require any regularity or  strict positivity assumptions on the potential $\phi_\sigma$).  Thus $\Omega_t(\phi_r)$ is the interior component 
of the curve $\theta\mapsto f(re^{i\theta})$ (that is, the component containing $z=0$), which gives $\Omega_t(\phi_r) = f(\mathbb D_r)$ as desired.
\end{proof}

\subsection{Extensions and Questions}

Under they hypothesis of Theorem \ref{thm:mainprecise} we have shown that the current $\omega_{FS}+ dd^c\Phi(\cdot,\tau)$ fails to be strictly positive on any interior fibre (that is for any $\tau$ with $0<|\tau|<1$).  Furthermore we have no reason to expect our solution to be smooth everywhere.  Thus the following two questions are natural:

\begin{question}
Does there exist a smooth family of potential $(\phi_\tau)_{\tau\in \partial B}$ for which the solution to the complex HMAE \eqref{eq:hmae} is everywhere smooth but not of maximal rank?
\end{question}

\begin{question}
Does there exist a smooth family of potential $(\phi_\tau)_{\tau\in \partial B}$ for which the solution to the complex HMAE \eqref{eq:hmae} such that $\omega + dd^c\Phi(\cdot,\tau)$ is a K\"ahler form for some $\tau$ with $0<|\tau|<1$ but not for others.
\end{question}

 
We are not currently able to answer these questions. However we believe that the degenerate solutions we describe in this paper are actually regular in the interior of the complement of the degenerate set $S$ (that is, they are smooth there and of maximal rank).    In fact from our previous work in \cite{RW} we can understand the set on which our solution is regular in terms of the collection of sets $\Omega_t(\phi)$ that are simply connected.  Now, our specific potential $\phi$ (Section \ref{sec:example}) was constructed to have curvature equal to $\omega_{FS}$ far away from the arc $\gamma = [-1,1]\subset \mathbb C_w\subset \mathbb P^1$, from which one can see that $\Omega_t(\phi)$ is a disc for sufficiently small $t$.  This gives an open set of $\mathbb P^1\times \mathbb D$ for which the solution $\Phi$ is regular.  Furthermore, by construction, $\Omega_1(\phi)$ is simply connected.  We think it likely that $\Omega_t(\phi)$ is actually simply connected for all $t$, which would give rather precise information about the set on which our solution is regular, but does not seem easy to prove that this is the case. 
 
 We furthermore believe that the fibrewise Laplacian of such a solution is uniformly bounded from below on the complement of $S$, so has a discontinuity on the boundary $\partial S$ where it jumps to zero.   A somewhat bold conjecture would be that any solution to the HMAE is regular away from the set where it fails to have maximal rank, and is smooth away from the boundary of this set.

\medskip
\small{
\noindent {\sc Julius Ross,  DPMMS , University of Cambridge, UK\\ j.ross@dpmms.cam.ac.uk}\medskip

\noindent{\sc David Witt Nystr\"om, 
Department of Mathematical Sciences, Chalmers University of Technology and the University of Gothenburg, Sweden \\ wittnyst@chalmers.se, danspolitik@gmail.com}

}

\end{document}